\documentclass[11pt]{article}
\bibliographystyle{plain} 
\usepackage{amssymb}
\usepackage{amsmath}
\usepackage{amsfonts}
\usepackage{amsthm}
\usepackage{amsmath}

\usepackage{tikz}

\usepackage[english]{babel}
\usepackage[affil-it]{authblk}

\oddsidemargin=1mm \topmargin=-25mm
\textwidth 17.3cm \textheight 26cm

  \makeatletter
\def\@fnsymbol#1{\ensuremath{\ifcase#1\or  \ddagger\or
		\mathsection\or \mathparagraph\or \|\or **\or \dagger\dagger
		\or \ddagger\ddagger \else\@ctrerr\fi}}
\makeatother

\sloppy 
\newtheorem{theorem}{Theorem}

\newtheorem{problem}{Problem}
\newtheorem{proposition}{Proposition}
\newtheorem{corollary}{Corollary}
\newtheorem{example}{Example} 

\newtheorem{construction}{Construction}

\providecommand{\keywords}[1]{\textbf{\textit{Keywords:}} #1}

\title{Wiener index and graphs, almost half of whose vertices satisfy \v{S}olt\'{e}s property}

\author[1]{Margarita Akhmejanova}
\author[1]{Konstantin Olmezov}
\author[1]{Aleksei Volostnov}
\author[2]{ Ilya Vorobyev}
\author[3]{Konstantin Vorob'ev}
\author[1]{Yury Yarovikov}

\affil[1]{Moscow Institute of Physics and Technology, Dolgoprudny, Russia. E-mail: mechmathrita@gmail.com, olmezov.ki@gmail.com, gyololo@rambler.ru, yu-rovikov@yandex.ru}
\affil[2]{Skolkovo Insitute of Science and Technology, Russia. E-mail: vorobyev.i.v@yandex.ru}
\affil[3]{Sobolev Institute of Mathematics, Novosibirsk, Russia. E-mail: vorobev@math.nsc.ru}

\begin{document}

\maketitle 

\begin{abstract}
The Wiener index $W(G)$ of a connected graph $G$ is a sum of distances between all pairs of vertices of $G$. 
In 1991, \v{S}olt\'{e}s formulated the problem of finding all graphs $G$ such that for every vertex $v$ the equality $W(G)=W(G-v)$ holds. The cycle $C_{11}$ is the only known graph with this property. In this paper we consider the following relaxation of the original problem: find a graph with a large proportion of vertices such that removing any one of them does not change the Wiener index of a graph.  
As the main result, we build an infinite series of graphs with the proportion of such vertices tending to $\frac{1}{2}$.

\end{abstract}

\keywords{Wiener index, transmission, \v{S}olt\'{e}s problem, vertex removing}

\section{Introduction}\label{S:Introduction}

Let $G=(V,E)$ be a simple connected graph. The {\it Wiener index} of a graph $G$ is defined as the sum of distances between all pairs of vertices of $G$:
$$W(G)=\frac{1}{2}\sum_{v,u\in V, v\neq u}{d_G(u,v)},$$
where $d_G(x,y)$ is a usual graph distance, i.e. the length of a shortest path from $x$ to $y$ in $G$.
This index was introduced in $1947$ by Wiener \cite{W1947} for applications in chemistry, where he used similar expression to determine the boiling point of the paraffin. In $1971$, Hosoya \cite{H1971} first defined this index in terms of graph theory. After that, the index for different families of graphs was investigated because of pure mathematical interest. Very recently, Egorov and Vesnin \cite{EgorVesn} investigated correlation of hyperbolic volumes of fullerenes (fullerene is a molecule consisting entirely of carbon atoms) with Wiener index and other related indexes. We also refer an interested reader to survey papers by Dobrynin, Entringer and Gutman \cite{DEG2001} and  Knor, \v{S}krekovski and Tepeh \cite{KST2016}.

Let us define the {\it transmission} of a vertex $v\in V$ in $G$ as $$t_G(v)=\sum_{u\in V\setminus{\{v\}}}{d_G(v,u)}.$$
Clearly, the Wiener index of a graph $G$ may be expressed in terms of vertices transmissions: 
\begin{equation*}
W(G)=\frac{1}{2}\sum_{v\in V}{t_G(v)}.
\end{equation*}
Let us denote by $G-v$ the graph obtained by removing the vertex $v$ from $G$. A vertex $v\in V$ is called {\it good} if $G-v$ is connected and $W(G) = W(G - v)$. In 1991, \v{S}olt\'{e}s \cite{S1991} posed the following problem.\\ 

\begin{problem}\label{Prob:1}
Find all graphs $G$ such that the equality $W(G) = W(G - v)$ holds for all their vertices $v$.
\end{problem}
Today the only known graph with this property is $C_{11}$. In other words, \v{S}olt\'{e}s asked about graphs such that all their vertices are good. After thirty years of study of this problem, which led to lots of elegant ideas, the problem is far from being solved. However, there are some partial results. In $2018$, Knor, Majstorovi\'{c} and \v{S}krekovski in \cite{KMS2018dam} constructed an infinite series of graphs with one good vertex. The same authors also showed \cite{KMS2018amc} that for $k\geq 3$ there are infinitely many graphs with one good vertex of degree $k$. In \cite{BJM2019}, for any $k\in \mathbb{N}$, Bok, Jedli\v{c}kov\'{a} and Maxov\'{a} found an infinite series of graphs with exactly $k$ good vertices. 

For a graph $G=(V,E)$ and vertices $v,u,w\in V$, let us define functions $\Delta_v(G)=W(G)-W(G-v)$ and $\delta_G^v(u,w)=d_G(u,w)-d_{G-v}(u,w)$. From the definitions we directly have the following 

\begin{proposition}\label{Prop:1}
	Given a graph $G=(V,E)$, $v\in V$. If $G-v$ is connected then the following equality holds:
	$$\Delta_v(G)=t_G(v)+\frac{1}{2}\sum_{u,w \in V\setminus \{v\}}{\delta^v_G(u,w)}.$$
	
\end{proposition} 
Instead of finding graphs with a fixed number of good vertices, one may try to find graphs with a fixed proportion of such vertices. So we denote a {\it proportion of good vertices} in $G$ as $\frac{|\{v\in V|\Delta_v(G)=0\}|}{|V|}.$ Now we are ready to formulate the following relaxation of Problem \ref{Prob:1}. 

\begin{problem}\label{Prob:2} 
For a fixed $\alpha\in (0,1]$ construct an infinite series $S$ of graphs such that for all $G=(V,E)$ from $S$ the following inequality takes place: $$ \frac{|\{v\in V|\Delta_v(G)=0\}|}{|V|} \geq \alpha.$$
\end{problem}

Clearly, a solution to this problem for $\alpha=1$ would give an infinite series of solutions to the Problem \ref{Prob:1}.
Let us note that the construction from \cite{BJM2019} after a slight modification yields an infinite series of graphs with the proportion of good vertices tending to $\frac{1}{3}$ (we will discuss it in more details in Section \ref{S:Lily}).

In this work, we provide two constructions  based on different ideas in order to improve this constant. 
In Section 2, we find an infinite series of graphs with the proportion of good vertices tending to $\frac{2}{5}$ as the number of vertices tends to infinity. In Section 3, we find one more series with the proportion tending to $\frac{1}{2}$. In Section 4, we discuss results and open problems.

\section{Bunch of $11$-cycles}\label{S:Bunch}
As it was mentioned in Section \ref{S:Introduction}, $C_{11}$ is the only one known graph where all vertices are good. Now we provide an intriguing construction of graphs with many $C_{11}$ as induced subgraphs.  

\begin{construction}\label{C:bunch} Given $k\in \mathbb{N}$, $k>1$. Let us define a graph $B(k)$.
	\begin{itemize}
		\item Step $1$. Take a path of length $5$ and denote its consecutive vertices by $w_0$, $w_1$, $w_2$, $w_3$, $w_4$ and $w_5$.
		\item Step $2$. Take $k$ distinct paths $(v_{i1},v_{i2},v_{i3},v_{i4},v_{i5})$, $i\in \{1,2, \dots, k\}$. 
		\item Step $3$. For $i\in \{1,2, \dots, k\}$, add edges $(w_0, v_{i1})$ and $(w_5, v_{i5})$ to our graph.
	\end{itemize}
\end{construction}

\begin{center}
\begin{tikzpicture}[scale=1, every node/.style={scale=0.9}, rotate=270, yscale=-1, xscale=-1 ]


\node[draw, circle] (w0) at (0,1) {$w_0$};
\node[draw, circle] (w5) at (0,-7) {$w_5$};


\node[draw, circle] (w1) at (-4,-1) {$w_1$};
\node[draw, circle] (w2) at (-4,-2.33) {$w_2$};
\node[draw, circle] (w3) at (-4,-3.66) {$w_3$};
\node[draw, circle] (w4) at (-4,-5) {$w_4$};

\draw [thick]  (w0) to (w1);
\draw [thick]  (w1) to (w2);
\draw [thick]  (w2) to (w3);
\draw [thick]  (w3) to (w4);
\draw [thick]  (w4) to (w5);


\node[draw, circle] (v11) at (-2,-1) {$v_{11}$};
\node[draw, circle] (v12) at (-2,-2) {$v_{12}$};
\node[draw, circle] (v13) at (-2,-3) {$v_{13}$};
\node[draw, circle] (v14) at (-2,-4) {$v_{14}$};
\node[draw, circle] (v15) at (-2,-5) {$v_{15}$};

\draw [thick]  (w0) to (v11);
\draw [thick]  (v11) to (v12);
\draw [thick]  (v12) to (v13);
\draw [thick]  (v13) to (v14);
\draw [thick]  (v14) to (v15);
\draw [thick]  (v15) to (w5);

\node[draw, circle] (v21) at (-1,-1) {$v_{21}$};
\node[draw, circle] (v22) at (-1,-2) {$v_{22}$};
\node[draw, circle] (v23) at (-1,-3) {$v_{23}$};
\node[draw, circle] (v24) at (-1,-4) {$v_{24}$};
\node[draw, circle] (v25) at (-1,-5) {$v_{25}$};

\draw [thick]  (w0) to (v21);
\draw [thick]  (v21) to (v22);
\draw [thick]  (v22) to (v23);
\draw [thick]  (v23) to (v24);
\draw [thick]  (v24) to (v25);
\draw [thick]  (v25) to (w5);

\draw [fill=black!100, color=black!100] (-0.2,-3) circle (0.01cm);
\draw [fill=black!100, color=black!100] (0,-3) circle (0.01cm);
\draw [fill=black!100, color=black!100] (0.2,-3) circle (0.01cm);

\node[draw, circle] (vk1) at (1,-1) {$v_{k1}$};
\node[draw, circle] (vk2) at (1,-2) {$v_{k2}$};
\node[draw, circle] (vk3) at (1,-3) {$v_{k3}$};
\node[draw, circle] (vk4) at (1,-4) {$v_{k4}$};
\node[draw, circle] (vk5) at (1,-5) {$v_{k5}$};

\draw [thick]  (w0) to (vk1);
\draw [thick]  (vk1) to (vk2);
\draw [thick]  (vk2) to (vk3);
\draw [thick]  (vk3) to (vk4);
\draw [thick]  (vk4) to (vk5);
\draw [thick]  (vk5) to (w5);

\node[below,font=\large\bfseries] at (current bounding box.south) {Figure 1. Graph $B(k)$ on $5k+6$ vertices};

\end{tikzpicture}
\end{center}

\begin{theorem}\label{T:bunch}
	For $k\in \mathbb{N}$, $k\geq 2$, the proportion of good vertices in $B(k)$ equals $\frac{2k}{5k+6}$.
\end{theorem}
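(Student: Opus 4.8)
The plan is to compute $\Delta_v(B(k))$ for every vertex $v$ by means of Proposition~\ref{Prop:1}, and to show that it vanishes exactly at the $2k$ vertices $v_{i1},v_{i5}$ ($1\le i\le k$) and is nonzero at the other $3k+6$ vertices whenever $k\ge 2$. Since $B(k)$ has $5k+6$ vertices, this gives the proportion $\tfrac{2k}{5k+6}$. Throughout, write $A=w_0$, $B=w_5$ and abbreviate $d=d_{B(k)}$.

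First I would record the distance structure. Every $w$- or $v$-path joins the two ``hubs'' $A,B$, and $d(A,B)=5$ is realised \emph{only} by the path $w_0w_1w_2w_3w_4w_5$, all other $A$--$B$ paths having length $6$. The distances to the hubs are $d(A,v_{ia})=a,\ d(B,v_{ia})=6-a$ and $d(A,w_a)=a,\ d(B,w_a)=5-a$; consequently any distance between vertices on different paths is obtained by routing through the cheaper hub, e.g. $d(v_{ia},v_{jb})=\min(a+b,\,12-a-b)$ for $i\ne j$ and $d(w_a,v_{jb})=\min(a+b,\,11-a-b)$. I would also use the reflection automorphism $w_\ell\mapsto w_{5-\ell},\ v_{i\ell}\mapsto v_{i,6-\ell}$, which pairs $v_{i1}\leftrightarrow v_{i5}$, $v_{i2}\leftrightarrow v_{i4}$, $w_1\leftrightarrow w_4$, $w_2\leftrightarrow w_3$ and halves the casework. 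Summing the formulas above makes the transmissions routine, giving $t(v_{i1})=20k+10$, $t(v_{i2})=23k+7$, $t(v_{i3})=24k+6$ and $t(w_0)=15k+15$, $t(w_1)=19k+11$, $t(w_2)=21k+9$ (the rest by symmetry).

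The heart of the matter is the correction term $I(v):=-\tfrac12\sum_{u,w}\delta^v_{B(k)}(u,w)\ge 0$, the total increase of pairwise distances caused by deleting $v$, so that $\Delta_v(B(k))=t(v)-I(v)$ and $v$ is good iff $t(v)=I(v)$. The key simplification is that deleting a degree-two vertex of a $v$-path (such as $v_{i1}$ or $v_{i3}$) keeps $d(A,B)=5$, so no distance between two vertices avoiding path $i$ changes; only pairs with an endpoint among the survivors $v_{i1},v_{i2},v_{i4},v_{i5}$ of that path contribute. After the deletion each such survivor reaches the rest of the graph through a single hub, so its new distances are a hub-distance shifted by a constant, and every increase is a $\max\!\big(0,\ \pm(d(A,x)-d(B,x))+c\big)$ term that I would sum over $x$. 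This yields $I(v_{i1})=20k+10=t(v_{i1})$, so $v_{i1}$ (and by symmetry $v_{i5}$) is good, whereas $I(v_{i2})=8k+22$ and $I(v_{i3})=4k+26$ give $\Delta_{v_{i2}}=15(k-1)$ and $\Delta_{v_{i3}}=20(k-1)$, both nonzero for $k\ge2$.

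The remaining and technically heaviest cases are the six $w$-vertices, where deletion perturbs the $A$--$B$ connection itself. Removing an interior $w_1,\dots,w_4$ destroys the unique shortest $w$-path and raises $d(A,B)$ from $5$ to $6$; tracking the resulting increases between the detached $w$-fragment and the $v$-paths exactly as before gives $I(w_1)=14k+16$ and $I(w_2)=6k+24$, hence $\Delta_{w_1}=5(k-1)$ and $\Delta_{w_2}=15(k-1)$, nonzero for $k\ge2$. Deleting a hub $w_0$ (or $w_5$) is genuinely different: the graph collapses to a spider centred at the other hub, every pair formerly routed through the deleted hub is forced the long way round, and a sum over arm-pairs produces a term growing like $20\binom{k}{2}$, so that $\Delta_{w_0}=15k+15-(20k^2+10k)<0$ for $k\ge2$. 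I expect the main obstacle to be exactly this bookkeeping of distance increases — keeping the hub-routing and the $\max(0,\cdot)$ case distinctions correct — rather than any conceptual difficulty; the single point needing exact rather than merely asymptotic care is the cancellation $I(v_{i1})=t(v_{i1})$ that makes $v_{i1}$ good.
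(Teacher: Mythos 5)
Your proposal is correct and follows essentially the same route as the paper: both apply Proposition~\ref{Prop:1}, compute $t_G(v)$ and the distance-increase sum directly for $v_{11}$ (your $I(v_{i1})=20k+10=t(v_{i1})$ matches the paper's table computation exactly), and invoke the reflection symmetry to cover all $v_{i1},v_{i5}$. In fact you go further than the paper, which dismisses the remaining vertices with ``by similar calculations it is easy to check'': your explicit values $\Delta_{v_{i2}}=15(k-1)$, $\Delta_{v_{i3}}=20(k-1)$, $\Delta_{w_1}=5(k-1)$, $\Delta_{w_2}=15(k-1)$ and the quadratic $\Delta_{w_0}=15k+15-(20k^2+10k)$ all check out (note the last vanishes at $k=1$, i.e.\ $C_{11}$, a nice consistency check for the hypothesis $k\geq 2$).
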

\begin{proof}
	Consider a vertex $v=v_{11}$ of the graph $G=B(k)=(V,E)$. Our goal is to show that $\Delta_{v}(B(k))=0$. According to Proposition \ref{Prop:1}, one need to find $t_G(v)$ and $\delta^{v}_G(u,w)$ for all vertices $u,w\in V\setminus \{v\}$.
	By direct calculations we have that $$t_G(v)=1+2+3+4+5+5+4+3+2+1+(k-1)(2+3+4+5+6)=20k+10.$$ It is easy to see from the construction of the graph that $\delta^{v}_G(u,w)=0$ whenever $u,w\notin \{v_{12},v_{13},v_{14},v_{15}\}$.  All nonzero values of $\delta^{v}_G(u,w)$ are presented in the Table \ref{Tab:bunch} (it also contains some zeros):

		\begin{table}[h]
		
		\centering
		\begin{tabular}{ |c|c|c|c|c|c|c|c|}
			
			\hline
			Vertices & $w_0$ & $w_1$ & $w_2$ & $w_3$ & $v_{i1}$ & $v_{i2}$ & $v_{i3}$ \\
			\hline

			$v_{12}$ & $-7$ & $-5$ & $-3$ & $-1$ & $-6$ & $-4$ & $-2$ \\
			$v_{13}$ & $-5$ & $-3$ & $-1$ & $0$ & $-4$ & $-2$ & $0$ \\
			$v_{14}$ & $-3$ & $-1$ & $0$ & $0$ & $-2$ & $0$ & $0$ \\
			$v_{15}$ & $-1$ & $0$ & $0$ & $0$ & $0$ & $0$ & $0$\\
		
			\hline
		\end{tabular}
		\caption{Values of $\delta^{v}_G(u,w)$, $i\in \{2,3,\dots, k\}$}
		{\label{Tab:bunch}}
	\end{table}	
Consequently, $$\frac{1}{2}\sum_{u,w \in V\setminus \{v\}}{\delta^{v}_G(u,w)}=-(7+5+3+1+5+3+1+3+1+1)-(k-1)(6+4+2+4+2+2)=-20k-10.$$
	By Proposition \ref{Prop:1} we have that $\Delta_{v}(G)=0$.
	Based on the symmetry of $B(k)$ we conclude that for all $z\in \{v_{11}, v_{21}, \dots, v_{k1}, v_{15}, v_{25}, \dots, v_{k5}\}$ we have $\Delta_{z}(G)=0$. By similar calculations it is easy to check, that for other vertices $z\in V$ we have $\Delta_{z}(G)\neq 0$. Therefore, $B(k)$ has exactly $2k$ good vertices among its $5k+6$ ones.

\end{proof}

\begin{corollary}\label{Cor:bunch}
	The series of graphs $B(k)$, $k\in \mathbb{N}$, has a proportion of good vertices tending to $\frac{2}{5}$ as $k$ tends to infinity.
\end{corollary}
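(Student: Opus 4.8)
The plan is to leverage the exact count already established in Theorem \ref{T:bunch} and simply pass to the limit. By that theorem, for every $k \geq 2$ the proportion of good vertices in $B(k)$ is exactly $\frac{2k}{5k+6}$, so the entire corollary reduces to evaluating $\lim_{k\to\infty} \frac{2k}{5k+6}$. There is no additional combinatorial content to supply; the theorem does all the structural work of identifying which vertices are good and counting them, and what remains is a routine limit of a rational expression.

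First I would rewrite the ratio in a form where the limit is transparent, dividing numerator and denominator by $k$ to obtain $\frac{2}{5 + 6/k}$. As $k \to \infty$ the term $6/k$ tends to $0$, so the expression converges to $\frac{2}{5}$. Equivalently, one may observe that the numerator and denominator are polynomials in $k$ of equal degree, so the limit equals the ratio of their leading coefficients, again $\frac{2}{5}$.

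I expect no genuine obstacle here, since the hard work is packaged in Theorem \ref{T:bunch}. The only point worth recording explicitly is the \emph{manner} of convergence: because $\frac{2k}{5k+6} < \frac{2k}{5k} = \frac{2}{5}$ for every $k$, the sequence approaches $\frac{2}{5}$ strictly from below and never attains it. This is consistent with the wording ``tending to $\frac{2}{5}$'' and confirms that the construction $B(k)$ does not itself solve Problem \ref{Prob:2} at the threshold $\alpha = \frac{2}{5}$, but realizes every value $\alpha < \frac{2}{5}$ for $k$ sufficiently large.
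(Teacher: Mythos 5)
Your proposal is correct and matches the paper's (implicit) argument: the corollary is an immediate consequence of Theorem \ref{T:bunch}, requiring only the elementary limit $\lim_{k\to\infty}\frac{2k}{5k+6}=\frac{2}{5}$, which the paper leaves to the reader. Your added remark that the convergence is strictly from below, so $B(k)$ realizes every $\alpha<\frac{2}{5}$ but not $\alpha=\frac{2}{5}$ itself, is a correct and worthwhile clarification but does not change the approach.
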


From Construction \ref{C:bunch} one may see that the graph $B(k)$ is $2$-connected, so the value $\Delta_v(B(k))$ is defined for any vertex $v$. In the next Section, we let a vertex-connectivity be $1$. It allows us to present one more construction with a proportion of good vertices tending to $\frac{1}{2}$.   

\section{Lily-shaped construction}\label{S:Lily}

 In this Section, we find one more series of graphs with a proportion of good vertices tending to $\frac{1}{2}$.
\begin{construction}\label{C:lily} Given $k,m\in \mathbb{N}$, $m\geq 7$ and $k\geq \frac{m-3}{m-6}$. Let us define a graph $L(k,m)$.
	\begin{itemize}
		\item Step $1$. 
		Define a graph containing $k$ distinct paths of length $3$ of the form $(u_0, v^i_{1j}, v^i_{2j}, v_i)$, $j\in \{1,2,\dots, k\}$, $i=1$. Note, that all these paths have two common vertices $u_0$ and $v_i$. In the rest of the paper, this graph will be called a {\it block}. 
		\item Step $2$. Add  $m-1$ distinct blocks of the same form to our graph. The only difference is a changing index $i\in \{2,3,\dots, m\}$. Note, that all these blocks contain exactly one common vertex $u_0$.
		\item Step $3$. For all $i_1,i_2\in \{1,2,\dots m\}$ and $j_1,j_2\in \{1,2,\dots k\}$ such that $i_1 \neq i_2$, add the edge $(v^{i_1}_{1j_1},v^{i_2}_{1j_2})$ to our graph. In other words, the graph induced by the set of vertices $\{v^i_{1j}| i\in \{1,2,\dots m\}, j\in \{1,2,\dots k\}\}$ is a the complete $m$-partite graph with parts $\{v^i_{11}, v^i_{12}, \dots , v^i_{1k} \}$, $i\in \{1,2,\dots, m\}$.
		\item Step $4$. Add a path $(u_0, u_1, u_2, \dots, u_d)$ to our graph, where $$d=\Big\lfloor{\frac{\sqrt{8km-48k-8m+33}+1}{2}}\Big\rfloor.$$ 
		\item Step $5$. Add one neighbour $u'_{t}$ to the vertex $u_t$, where $$t=km-6k-m+3-\frac{(d+1)(d-2)}{2}.$$
	\end{itemize}
\end{construction}

\begin{center}

\begin{tikzpicture}[thick, scale=1, every node/.style={scale=0.9}]

\def \l {1} 

\def \d {3.3}

\def \alfabf {30}

\node[draw, rectangle] (w) at (0, 0) {$u_0$};
\node[draw, rectangle] (b1v1) at ({3*\d*cos(\alfabf)}, {3*\d*sin(\alfabf))}) {$v_1$};

\def \rr {(\d^2+(3*\l/2)^2)^(1/2)}
\def \alfaa {atan((3/2*\l)/(\d))}
\node[draw, rectangle] (b1v11) at ({\rr*cos(\alfabf+\alfaa)} , {\rr*sin(\alfabf+\alfaa)}) {$v^1_{11}$};

\def \rrr {((2*\d)^2+(3*\l/2)^2)^(1/2)}
\def \alfaaa {atan((3/2*\l)/(2*\d))}
\node[draw, rectangle] (b1v21) at ({\rrr*cos(\alfabf+\alfaaa)} , {\rrr*sin(\alfabf+\alfaaa)}) {$v^1_{21}$};

\def \rrrr {(\d^2+(\l/2)^2)^(1/2)}
\def \alfaaaa {atan((\l/2)/(\d))}
\node[draw, rectangle] (b1v12) at ({\rrrr*cos(\alfabf+\alfaaaa)} , {\rrrr*sin(\alfabf+\alfaaaa)}) {$v^1_{12}$};

\def \rrrrr {((2*\d)^2+(\l/2)^2)^(1/2)}
\def \alfaaaaa {atan((\l/2)/(2*\d))}
\node[draw, rectangle] (b1v22) at ({\rrrrr*cos(\alfabf+\alfaaaaa)} , {\rrrrr*sin(\alfabf+\alfaaaaa)}) {$v^1_{22}$};

\def \rd {((3*\d/2)^2+(\l/2)^2)^(1/2)}
\def \alfad {atan((\l/2)/(3*\d/2))}
\draw [fill=black!100, color=black!100] ({\rd*cos(\alfabf-\alfad)} , {\rd*sin(\alfabf-\alfad)}) circle (0.01cm);

\def \rdl {((3*\d/2)^2+(\l/4)^2)^(1/2)}
\def \alfadl {atan((\l/4)/(3*\d/2))}
\draw [fill=black!100, color=black!100] ({\rdl*cos(\alfabf-\alfadl)} , {\rdl*sin(\alfabf-\alfadl)}) circle (0.01cm);

\def \rdr {((3*\d/2)^2+(3*\l/4)^2)^(1/2)}
\def \alfadr {atan((3*\l/4)/(3*\d/2))}
\draw [fill=black!100, color=black!100] ({\rdr*cos(\alfabf-\alfadr)} , {\rdr*sin(\alfabf-\alfadr)}) circle (0.01cm);

\def \rrrrrr {(\d^2+(3*\l/2)^2)^(1/2)}
\def \alfaaaaaa {atan((3*\l/2)/(\d))}
\node[draw, rectangle] (b1v1k) at ({\rrrrrr*cos(\alfabf-\alfaaaaaa)} , {\rrrrrr*sin(\alfabf-\alfaaaaaa)}) {$v^1_{1k}$};

\def \rrrrrrr {((2*\d)^2+(3*\l/2)^2)^(1/2)}
\def \alfaaaaaaa {atan((3*\l/2)/(2*\d))}
\node[draw, rectangle] (b1v2k) at ({\rrrrrrr*cos(\alfabf-\alfaaaaaaa)} , {\rrrrrrr*sin(\alfabf-\alfaaaaaaa)}) {$v^1_{2k}$};

\draw [thick]  (w) to (b1v11);
\draw [thick]  (w) to (b1v12);
\draw [thick]  (w) to (b1v1k);

\draw [thick]  (b1v11) to (b1v21);
\draw [thick]  (b1v12) to (b1v22);
\draw [thick]  (b1v1k) to (b1v2k);

\draw [thick]  (b1v21) to (b1v1);
\draw [thick]  (b1v22) to (b1v1);
\draw [thick]  (b1v2k) to (b1v1);


\def \alfabs {150}

\node[draw, rectangle] (b2v1) at ({3*\d*cos(\alfabs)}, {3*\d*sin(\alfabs))}) {$v_2$};

\def \rr {(\d^2+(3*\l/2)^2)^(1/2)}
\def \alfaa {atan((3/2*\l)/(\d))}
\node[draw, rectangle] (b2v11) at ({\rr*cos(\alfabs+\alfaa)} , {\rr*sin(\alfabs+\alfaa)}) {$v^2_{11}$};

\def \rrr {((2*\d)^2+(3*\l/2)^2)^(1/2)}
\def \alfaaa {atan((3/2*\l)/(2*\d))}
\node[draw, rectangle] (b2v21) at ({\rrr*cos(\alfabs+\alfaaa)} , {\rrr*sin(\alfabs+\alfaaa)}) {$v^2_{21}$};

\def \rrrr {(\d^2+(\l/2)^2)^(1/2)}
\def \alfaaaa {atan((\l/2)/(\d))}
\node[draw, rectangle] (b2v12) at ({\rrrr*cos(\alfabs+\alfaaaa)} , {\rrrr*sin(\alfabs+\alfaaaa)}) {$v^2_{12}$};

\def \rrrrr {((2*\d)^2+(\l/2)^2)^(1/2)}
\def \alfaaaaa {atan((\l/2)/(2*\d))}
\node[draw, rectangle] (b2v22) at ({\rrrrr*cos(\alfabs+\alfaaaaa)} , {\rrrrr*sin(\alfabs+\alfaaaaa)}) {$v^2_{22}$};

\def \rd {((3*\d/2)^2+(\l/2)^2)^(1/2)}
\def \alfad {atan((\l/2)/(3*\d/2))}
\draw [fill=black!100, color=black!100] ({\rd*cos(\alfabs-\alfad)} , {\rd*sin(\alfabs-\alfad)}) circle (0.01cm);

\def \rdl {((3*\d/2)^2+(\l/4)^2)^(1/2)}
\def \alfadl {atan((\l/4)/(3*\d/2))}
\draw [fill=black!100, color=black!100] ({\rdl*cos(\alfabs-\alfadl)} , {\rdl*sin(\alfabs-\alfadl)}) circle (0.01cm);

\def \rdr {((3*\d/2)^2+(3*\l/4)^2)^(1/2)}
\def \alfadr {atan((3*\l/4)/(3*\d/2))}
\draw [fill=black!100, color=black!100] ({\rdr*cos(\alfabs-\alfadr)} , {\rdr*sin(\alfabs-\alfadr)}) circle (0.01cm);

\def \rrrrrr {(\d^2+(3*\l/2)^2)^(1/2)}
\def \alfaaaaaa {atan((3*\l/2)/(\d))}
\node[draw, rectangle] (b2v1k) at ({\rrrrrr*cos(\alfabs-\alfaaaaaa)} , {\rrrrrr*sin(\alfabs-\alfaaaaaa)}) {$v^2_{1k}$};

\def \rrrrrrr {((2*\d)^2+(3*\l/2)^2)^(1/2)}
\def \alfaaaaaaa {atan((3*\l/2)/(2*\d))}
\node[draw, rectangle] (b2v2k) at ({\rrrrrrr*cos(\alfabs-\alfaaaaaaa)} , {\rrrrrrr*sin(\alfabs-\alfaaaaaaa)}) {$v^2_{2k}$};

\draw [thick]  (w) to (b2v11);
\draw [thick]  (w) to (b2v12);
\draw [thick]  (w) to (b2v1k);

\draw [thick]  (b2v11) to (b2v21);
\draw [thick]  (b2v12) to (b2v22);
\draw [thick]  (b2v1k) to (b2v2k);

\draw [thick]  (b2v21) to (b2v1);
\draw [thick]  (b2v22) to (b2v1);
\draw [thick]  (b2v2k) to (b2v1);


\def \alfabt {270}

\node[draw, rectangle] (bmv1) at ({3*\d*cos(\alfabt)}, {3*\d*sin(\alfabt))}) {$v_m$};

\def \rr {(\d^2+(3*\l/2)^2)^(1/2)}
\def \alfaa {atan((3/2*\l)/(\d))}
\node[draw, rectangle] (bmv11) at ({\rr*cos(\alfabt+\alfaa)} , {\rr*sin(\alfabt+\alfaa)}) {$v^m_{11}$};

\def \rrr {((2*\d)^2+(3*\l/2)^2)^(1/2)}
\def \alfaaa {atan((3/2*\l)/(2*\d))}
\node[draw, rectangle] (bmv21) at ({\rrr*cos(\alfabt+\alfaaa)} , {\rrr*sin(\alfabt+\alfaaa)}) {$v^m_{21}$};

\def \rrrr {(\d^2+(\l/2)^2)^(1/2)}
\def \alfaaaa {atan((\l/2)/(\d))}
\node[draw, rectangle] (bmv12) at ({\rrrr*cos(\alfabt+\alfaaaa)} , {\rrrr*sin(\alfabt+\alfaaaa)}) {$v^m_{12}$};

\def \rrrrr {((2*\d)^2+(\l/2)^2)^(1/2)}
\def \alfaaaaa {atan((\l/2)/(2*\d))}
\node[draw, rectangle] (bmv22) at ({\rrrrr*cos(\alfabt+\alfaaaaa)} , {\rrrrr*sin(\alfabt+\alfaaaaa)}) {$v^m_{22}$};

\def \rd {((3*\d/2)^2+(\l/2)^2)^(1/2)}
\def \alfad {atan((\l/2)/(3*\d/2))}
\draw [fill=black!100, color=black!100] ({\rd*cos(\alfabt-\alfad)} , {\rd*sin(\alfabt-\alfad)}) circle (0.01cm);

\def \rdl {((3*\d/2)^2+(\l/4)^2)^(1/2)}
\def \alfadl {atan((\l/4)/(3*\d/2))}
\draw [fill=black!100, color=black!100] ({\rdl*cos(\alfabt-\alfadl)} , {\rdl*sin(\alfabt-\alfadl)}) circle (0.01cm);

\def \rdr {((3*\d/2)^2+(3*\l/4)^2)^(1/2)}
\def \alfadr {atan((3*\l/4)/(3*\d/2))}
\draw [fill=black!100, color=black!100] ({\rdr*cos(\alfabt-\alfadr)} , {\rdr*sin(\alfabt-\alfadr)}) circle (0.01cm);

\def \rrrrrr {(\d^2+(3*\l/2)^2)^(1/2)}
\def \alfaaaaaa {atan((3*\l/2)/(\d))}
\node[draw, rectangle] (bmv1k) at ({\rrrrrr*cos(\alfabt-\alfaaaaaa)} , {\rrrrrr*sin(\alfabt-\alfaaaaaa)}) {$v^m_{1k}$};

\def \rrrrrrr {((2*\d)^2+(3*\l/2)^2)^(1/2)}
\def \alfaaaaaaa {atan((3*\l/2)/(2*\d))}
\node[draw, rectangle] (bmv2k) at ({\rrrrrrr*cos(\alfabt-\alfaaaaaaa)} , {\rrrrrrr*sin(\alfabt-\alfaaaaaaa)}) {$v^m_{2k}$};

\draw [thick]  (w) to (bmv11);
\draw [thick]  (w) to (bmv12);
\draw [thick]  (w) to (bmv1k);

\draw [thick]  (bmv11) to (bmv21);
\draw [thick]  (bmv12) to (bmv22);
\draw [thick]  (bmv1k) to (bmv2k);

\draw [thick]  (bmv21) to (bmv1);
\draw [thick]  (bmv22) to (bmv1);
\draw [thick]  (bmv2k) to (bmv1);


\def \rdd {((3*\d/2)}

\def \alfadd {(\alfabt+\alfabs)/2}
\draw [fill=black!100, color=black!100] ({\rdd*cos(\alfadd)} , {\rdd*sin(\alfadd)}) circle (0.01cm);

\def \alfaddl {((\alfabt+\alfabs)/2*3+(\alfabs+atan((3*\l/2)/\d)))/4}
\def \alfaddr {((\alfabt+\alfabs)/2*3+(\alfabt-atan((3*\l/2)/\d)))/4}
\draw [fill=black!100, color=black!100] ({\rdd*cos(\alfaddl)} , {\rdd*sin(\alfaddl)}) circle (0.01cm);
\draw [fill=black!100, color=black!100] ({\rdd*cos(\alfaddr)} , {\rdd*sin(\alfaddr)}) circle (0.01cm);

\def \xstep {0}
\def \ystep {-\d/3.5}
\def \xo {1.5*\d-\xstep}
\def \yo {-1*\d-\ystep}

\node[draw, rectangle] (u1) at ({\xo+\xstep}, {\yo+\ystep}) {$u_1$};
\node[draw, rectangle] (u2) at ({\xo+2*\xstep}, {\yo+2*\ystep}) {$u_2$};
\draw [fill=black!100, color=black!100] ({\xo+3*\xstep},{\yo+3*\ystep}) circle (0.01cm);
\draw [fill=black!100, color=black!100] ({\xo+2.8*\xstep},{\yo+2.8*\ystep}) circle (0.01cm);
\draw [fill=black!100, color=black!100] ({\xo+3.2*\xstep},{\yo+3.2*\ystep}) circle (0.01cm);
\node[draw, rectangle] (utl) at ({\xo+4*\xstep}, {\yo+4*\ystep}) {$u_{t-1}$};
\node[draw, rectangle] (ut) at ({\xo+5*\xstep}, {\yo+5*\ystep}) {$u_{t}$};
\node[draw, rectangle, dashed] (u't) at ({\xo+5*\xstep-1}, {\yo+5*\ystep}) {$u'_{t}$};
\node[draw, rectangle] (utr) at ({\xo+6*\xstep}, {\yo+6*\ystep}) {$u_{t+1}$};
\draw [fill=black!100, color=black!100] ({\xo+7*\xstep},{\yo+7*\ystep}) circle (0.01cm);
\draw [fill=black!100, color=black!100] ({\xo+6.8*\xstep},{\yo+6.8*\ystep}) circle (0.01cm);
\draw [fill=black!100, color=black!100] ({\xo+7.2*\xstep},{\yo+7.2*\ystep}) circle (0.01cm);
\node[draw, rectangle] (ud) at ({\xo+8*\xstep}, {\yo+8*\ystep}) {$u_{d}$};

\draw [thick]  (w) to (u1);
\draw [thick]  (u1) to (u2);
\draw [thick]  (utl) to (ut);
\draw [thick]  (ut) to (utr);
\draw [thick]  (ut) to (u't);
\draw [thick]  (u2) to ({\xo+2.6*\xstep},{\yo+2.6*\ystep});
\draw [thick]  (utl) to ({\xo+3.4*\xstep},{\yo+3.4*\ystep});
\draw [thick]  (utr) to ({\xo+6.6*\xstep},{\yo+6.6*\ystep});
\draw [thick]  (ud) to ({\xo+7.4*\xstep},{\yo+7.4*\ystep});


\draw [thick, loosely dashed]  (b1v11) to (b2v11);
\draw [thick, loosely dashed]  (b1v11) to (b2v12);
\draw [thick, loosely dashed]  (b1v11) to (b2v1k);

\draw [thick, loosely dashed]  (b1v12) to (b2v11);
\draw [thick, loosely dashed]  (b1v12) to (b2v12);
\draw [thick, loosely dashed]  (b1v12) to (b2v1k);

\draw [thick, loosely dashed]  (b1v1k) to (b2v11);
\draw [thick, loosely dashed]  (b1v1k) to (b2v12);
\draw [thick, loosely dashed]  (b1v1k) to (b2v1k);

\draw [thick, loosely dashed]  (b1v11) to (b2v11);
\draw [thick, loosely dashed]  (b1v11) to (b2v12);
\draw [thick, loosely dashed]  (b1v11) to (b2v1k);
\draw [thick, loosely dashed]  (b1v12) to (b2v11);
\draw [thick, loosely dashed]  (b1v12) to (b2v12);
\draw [thick, loosely dashed]  (b1v12) to (b2v1k);
\draw [thick, loosely dashed]  (b1v1k) to (b2v11);
\draw [thick, loosely dashed]  (b1v1k) to (b2v12);
\draw [thick, loosely dashed]  (b1v1k) to (b2v1k);

\draw [thick, loosely dashed]  (bmv11) to (b1v11);
\draw [thick, loosely dashed]  (bmv11) to (b1v12);
\draw [thick, loosely dashed]  (bmv11) to (b1v1k);
\draw [thick, loosely dashed]  (bmv11) to (b2v11);
\draw [thick, loosely dashed]  (bmv11) to (b2v12);
\draw [thick, loosely dashed]  (bmv11) to (b2v1k);

\draw [thick, loosely dashed]  (bmv12) to (b1v11);
\draw [thick, loosely dashed]  (bmv12) to (b1v12);
\draw [thick, loosely dashed]  (bmv12) to (b1v1k);
\draw [thick, loosely dashed]  (bmv12) to (b2v11);
\draw [thick, loosely dashed]  (bmv12) to (b2v12);
\draw [thick, loosely dashed]  (bmv12) to (b2v1k);

\draw [thick, loosely dashed]  (bmv1k) to (b1v11);
\draw [thick, loosely dashed]  (bmv1k) to (b1v12);
\draw [thick, loosely dashed]  (bmv1k) to (b1v1k);
\draw [thick, loosely dashed]  (bmv1k) to (b2v11);
\draw [thick, loosely dashed]  (bmv1k) to (b2v12);
\draw [thick, loosely dashed]  (bmv1k) to (b2v1k);

\node[below,font=\large\bfseries] at (current bounding box.south) {Figure 2. Graph $L(k,m)$};

\end{tikzpicture}
\end{center}

\begin{theorem}\label{T:lily}
	For $k,m\in \mathbb{N}$, $m\geq 7$ and $k\geq \frac{m-3}{m-6}$, the proportion of good vertices of $L(k,m)$ equals $$\frac{km}{2km+m+2+\Big\lfloor{\frac{\sqrt{8km-48k-8m+33}+1}{2}}\Big\rfloor}.$$

\end{theorem}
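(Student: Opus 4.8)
The plan is to prove that the good vertices of $L(k,m)$ are exactly the $km$ \emph{first-layer} vertices $v^i_{1j}$, $i\in\{1,\dots,m\}$, $j\in\{1,\dots,k\}$, and that $L(k,m)$ has $2km+m+2+d$ vertices in total. The count is immediate: $u_0$, the $m$ vertices $v_i$, the $2km$ vertices $v^i_{1j}$ and $v^i_{2j}$, the $d$ path vertices $u_1,\dots,u_d$, and the pendant $u'_t$. Dividing $km$ by this total gives the asserted proportion, so everything reduces to identifying the good set.

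First I would fix $v=v^i_{1j}$ and exploit the redundancy of the complete $m$-partite first layer. The only neighbour of $v$ not shared with the $k-1\ge 1$ other vertices of its part (or with the rest of the layer and with $u_0$) is $v^i_{2j}$; and $v^i_{2j}$ itself has only the two neighbours $v^i_{1j}$ and $v_i$. Consequently, deleting $v$ leaves a parallel shortest route for every pair not involving $v^i_{2j}$, so $\delta^v_G(u,w)=0$ unless $\{u,w\}\ni v^i_{2j}$, while in $G-v$ the vertex $v^i_{2j}$ reaches the graph only through $v_i$, giving $d_{G-v}(v^i_{2j},x)=1+d_G(v_i,x)$ for every $x$. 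I would then tabulate $d_G(v^i_{2j},x)$ and $d_G(v_i,x)$ over the vertex classes and read off $\delta^v_G(v^i_{2j},x)=d_G(v^i_{2j},x)-1-d_G(v_i,x)$; it equals $-2$ for every vertex outside block $i$ (namely $u_0$, the $2(m-1)k$ vertices of the other blocks' two layers, the $m-1$ apexes $v_{i'}$, the $d$ path vertices, and $u'_t$) and $0$ for the vertices inside block $i$.

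Next I would assemble Proposition~\ref{Prop:1}. A routine count gives $t_G(v^i_{1j})=3mk+2k+3m-2+\tfrac{d(d+1)}{2}+d+t$, and the $\delta$-table gives $\tfrac12\sum_{u,w}\delta^v_G(u,w)=-2\bigl(2(m-1)k+(m-1)+d+2\bigr)$. Adding these and simplifying collapses to $\Delta_v(G)=t-\bigl(km-6k-m+4-\tfrac{d(d-1)}{2}\bigr)$. Since $\tfrac{(d+1)(d-2)}{2}=\tfrac{d(d-1)}{2}-1$, the value $t=km-6k-m+3-\tfrac{(d+1)(d-2)}{2}$ prescribed by Construction~\ref{C:lily} equals exactly $km-6k-m+4-\tfrac{d(d-1)}{2}$, whence $\Delta_v(G)=0$. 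Here I would also verify that the construction is self-consistent: with $N:=km-6k-m+4$ the displayed $d$ is precisely the largest integer with $\binom{d}{2}\le N$ (equivalently $d\le\tfrac{1+\sqrt{1+8N}}{2}$, and $1+8N=8km-48k-8m+33$), so $t=N-\binom{d}{2}$ lands in $\{0,\dots,d-1\}$; the hypotheses $m\ge 7$ and $k\ge\tfrac{m-3}{m-6}$ force $N\ge 1$, hence $d\ge 2$. By the automorphisms of $L(k,m)$ permuting the $m$ blocks and the $k$ paths within a block, all $km$ vertices $v^i_{1j}$ are good.

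Finally I would rule out the remaining vertices. Deleting $u_0$, or any $u_\ell$ with $\ell\le d-1$ (and $u_d$ when $t=d$), disconnects the tail, so these are not good by definition; the pendant $u'_t$ and the leaf $u_d$ (when $t\ne d$) lie on no shortest path between other vertices, so their $\Delta$ equals their positive transmission. For $v^i_{2j}$ the same redundancy argument shows only the pair $\{v^i_{1j},v_i\}$ changes (by $-2$), giving $\Delta_{v^i_{2j}}(G)=t_G(v^i_{2j})-2>0$; for $v_i$ only the $\binom{k}{2}$ pairs $\{v^i_{2j},v^i_{2j'}\}$ change (each by $-2$), giving $\Delta_{v_i}(G)=t_G(v_i)-k(k-1)>0$. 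Hence the good set is exactly $\{v^i_{1j}\}$ and the proportion equals $\tfrac{km}{2km+m+2+d}$. The main obstacle is the distance bookkeeping — proving rigorously that removing $v^i_{1j}$ perturbs only the distances emanating from $v^i_{2j}$, and then carrying the class-by-class tabulation without slips — together with recognising that the opaque formulas for $d$ and $t$ are reverse-engineered so that $t$ hits the target $km-6k-m+4-\binom{d}{2}$ forced by $\Delta_{v^i_{1j}}(G)=0$ while keeping $u_t$ on the path.
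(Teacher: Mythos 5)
Your main computation is correct and follows essentially the same route as the paper: the paper proves $\Delta_{v^1_{11}}(\tilde G)=t_{G'}(u_0)-|V'|-km+6k+m-3$ for an \emph{arbitrary} graph $G'$ attached at $u_0$ and then chooses the tail (path of length $d$ plus pendant $u'_t$) so that $t_{G'}(u_0)-|V'|=km-6k-m+3$, while you substitute the tail directly; the two computations agree line by line (your $t_G(v^i_{1j})=3mk+2k+3m-2+\tfrac{d(d+1)}{2}+d+t$ is exactly the paper's $3km+2k+3m-5+t_{G'}(u_0)+|V'|$, your $\delta$-table matches the paper's Table 2, and your reading of $d$ as the largest integer with $\binom{d}{2}\le km-6k-m+4$ and $t=N-\binom{d}{2}\in\{0,\dots,d-1\}$ is a correct, in fact slightly sharper, restatement of the paper's inequality $t<d+1$). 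Your treatment of the non-good vertices is more explicit than the paper's, which disposes of them in two hand-waving sentences.

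However, one step of your endgame is false as stated: the claim $\Delta_{v_i}(G)=t_G(v_i)-k(k-1)>0$. Your identification of the changed pairs is right ($\binom{k}{2}$ pairs $\{v^i_{2j},v^i_{2j'}\}$, each changing by $-2$), but $t_G(v_i)$ grows only linearly in $k$ for fixed $m$: summing over the classes and substituting the value of $t$ gives $t_G(v_i)=8mk-10k+4m+6+4d$ with $d=O(\sqrt{km})$, whereas $k(k-1)$ is quadratic in $k$. For example, with $m=7$, $k=100$ one gets $d=14$ and $\Delta_{v_i}=8mk-9k+4m+6+4d-k^2=-5210<0$. So the inequality fails for all large $k$ at fixed $m$ — precisely the regime used in Corollary~\ref{Cor:lily}. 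What you actually need is $\Delta_{v_i}\neq 0$, i.e.\ that $k^2-(8m-9)k-4m-6-4d$ never vanishes under the hypotheses, and this requires a separate argument (your proof supplies none, and neither does the paper: its closing assertion that ``a corresponding transmission is always greater than possible changes'' is inaccurate for $v_i$ in the same regime). So your write-up has a genuine, concretely identifiable gap at this point — though it is fair to say the published proof glosses over the very same step, and everything else in your proposal, including the vertex count $2km+m+2+d$ and the arguments for $u_\ell$, $u_d$, $u'_t$ and $v^i_{2j}$, is sound.
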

\begin{proof}
	Let $G=(V,E)$ the a graph obtained after steps $1-3$ of Construction \ref{C:lily}. Consider a vertex $v=v^1_{11}$ of the graph $G=(V,E)$. Take an arbitrary graph $G'=(V',E')$ such that $V\cap V'=\{u_0\}$. Then, let us define a graph $\tilde{G}=(\tilde{V},\tilde{E})=(V\cup V', E\cup E')$. We do not specify sets $V'$ and $E'$ by now in order to see what conditions on these sets one should impose. 
	
	Our final goal is to find a graph $\tilde{G}$ such that $\Delta_{v}(\tilde{G})=0$. According to Proposition \ref{Prop:1}, we know that $\Delta_v(\tilde{G})=t_{\tilde{G}}(v)+\frac{1}{2}\sum_{u,w \in \tilde{V}\setminus \{v\}}{\delta^v_{\tilde{G}}(u,w)}$. Therefore we proceed by determining $t_{\tilde{G}}(v)$ and $\delta^{v}_{\tilde{G}}(u,w)$ for all vertices $u,w\in \tilde{V}\setminus \{v\}$.
	
	
	By direct calculations we have that $$t_{\tilde{G}}(v)=(1+2)+2(k-1)+3(k-1)+(m-1)(k+2k+3)+t_{G'}(u_0)+|V'|.$$ Therefore, $$t_{\tilde{G}}(v)=3km+2k+3m-5+t_{G'}(u_0)+|V'|.$$  It is easy to see from the construction of the graph, that $\delta^{v}_{\tilde{G}}(u,w)=0$ whenever $u\neq v^1_{21}$ and $w\neq v^1_{21}$. All nonzero values of $\delta^{v}_{\tilde{G}}(u,w)$ are presented in the Table \ref{Tab:lily}.

	\begin{table}[h]
		
		\centering
		\begin{tabular}{ |c|c|c|c|c|}
			
			\hline
			Vertices & $z$ & $v^i_{1j}$ & $v^i_{2j}$ & $v^i$ \\
			\hline
			$v^1_{21}$ & $-2$ & $-2$ & $-2$ & $-2$ \\

			\hline
		\end{tabular}
		\caption{Values of $\delta^{v}_{\tilde{G}}(u,w)$, $z\in V'$, $i\in \{2,3,\dots, m\}$, $j\in \{1,2,\dots, k\}$.}
		{\label{Tab:lily}}
	\end{table}	
Consequently, $$\frac{1}{2}\sum_{u,w \in V\setminus \{v\}}{\delta^v_{\tilde{G}}(u,w)}=-2\Big(|V'|+k(m-1)+k(m-1)+(m-1)\Big)=-2|V'|-4km+4k-2m+2.$$
	
By Proposition \ref{Prop:1} we have that $\Delta_{v}(\tilde{G})=t_{G'}(u_0)-|V'|-km+6k+m-3$. Evidently, $\Delta_{v}(\tilde{G})$ depends on $t_{G'}(u_0)-|V'|$, but not on the structure of a graph $G'$. Therefore, we are free to take arbitrary graph $G'$ such that the following equation holds 
\begin{equation}\label{eq:lily}
t_{G'}(u_0)-|V'|=km-6k-m+3.
\end{equation}

According to steps $4$ and $5$ of Construction \ref{C:lily}, we take a path $(u_0, u_1, u_2, \dots, u_d)$, with one additional vertex $u'_t$ and edge $(u_t,u'_t)$ and finally construct the graph $L(k,m)$, where $$d=\Big\lfloor{\frac{\sqrt{8km-48k-8m+33}+1}{2}}\Big\rfloor ,\, t=km-6k-m+3-\frac{(d+1)(d-2)}{2}.$$ 
In other words, $d$ is a floor of solution to the equation (\ref{eq:lily}) for the path $(u_0, u_1, u_2, \dots, u_d)$: $$(1+2+\dots+d)-(d+1)\leq km-6k-m+3 < (1+2+\dots+d+(d+1))-(d+2).$$ By the assumption of the theorem we know that $m\geq 7$ and $k\geq \frac{m-3}{m-6}$. Hence, $t$ is a non-negative integer and $t<d+1$, so $t$ is defined correctly. Note that if $t=0$ then one does not need to add a vertex $u'_t$ (equation (\ref{eq:lily}) holds). However, in this case we add $u'_0$ as a neighbour of $u_0$ to obtain the resulting graph on the same number of vertices.  
Finally, we conclude that $\Delta_{v_{11}}(L(k,m))=0$.
Based on symmetry of $L(k,m)$, we conclude that for all $z\in \{v^i_{1j}|i\in \{1,2\dots m\}, j\in \{1,2,\dots k \}\}$ the equation $\Delta_{z}(G)=0$ holds.

One may notice that $v_{11}$ and other neighbours of $u_0$ (except $u_1$) are good because its deletion makes a lot of shortest paths longer. It is easy to see from the construction, that other vertices of the graph do not have this property, so either a corresponding transmission is always greater than possible changes of lengths of the shortest paths or the graph with deleted vertex is disconnected. Thus, other vertices can not be good and the theorem is proved.
	
\end{proof}
From the theorem we directly deduce the following
\begin{corollary}\label{Cor:lily}
	For a fixed $m\in \mathbb{N}$, $m\geq 7$, the series of graphs $L(k,m)$, $k\in \mathbb{N}$ has a proportion of good vertices tending to $\frac{1}{2}$ as $k$ tends to infinity.
\end{corollary}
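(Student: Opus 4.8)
The plan is to take the exact formula for the proportion of good vertices furnished by Theorem \ref{T:lily} and compute its limit as $k \to \infty$ with $m$ held fixed. Denoting this proportion by $P(k,m)$, I would divide both numerator and denominator by $k$: the numerator then becomes the constant $m$, and the whole question reduces to determining the asymptotic order, relative to $k$, of each summand in the denominator.

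First I would separate the denominator into its three pieces: the leading term $2km$, the constant $m+2$, and the floor term $\big\lfloor \frac{\sqrt{8km-48k-8m+33}+1}{2}\big\rfloor$. After division by $k$, the first piece contributes $2m$ and the constant piece contributes $(m+2)/k \to 0$. The only term demanding attention is the floor, so the crucial step is to show that it is $o(k)$.

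To control the floor term I would observe that, for fixed $m \geq 7$, its radicand equals $8(m-6)k + (33-8m)$, which grows linearly in $k$ because $m-6 \geq 1$. Consequently the square root is of order $\sqrt{k}$, the entire floor term is $O(\sqrt{k})$, and after division by $k$ it contributes a quantity of order $1/\sqrt{k}$, which vanishes in the limit. Assembling these estimates, the denominator divided by $k$ tends to $2m$, whence $P(k,m) \to \frac{m}{2m} = \frac{1}{2}$.

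I do not expect a genuine obstacle here, as the argument is a routine limit evaluation once Theorem \ref{T:lily} is available. The one point requiring a little care is confirming that the floor term grows strictly slower than $k$ --- that is, verifying the $O(\sqrt{k})$ bound rather than a spurious $\Theta(k)$ --- and this rests entirely on the radicand being linear rather than quadratic in $k$. Since every term under the root is at most linear in $k$, the bound is immediate; the hypothesis $m \geq 7$ serves only to keep the radicand eventually positive so that the expression of Theorem \ref{T:lily} is well defined.
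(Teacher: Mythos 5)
Your proposal is correct and is precisely the routine limit computation by which the paper ``directly deduces'' the corollary from Theorem \ref{T:lily}: divide by $k$, note the radicand $8(m-6)k+(33-8m)$ is linear in $k$ for fixed $m\geq 7$, so the floor term is $O(\sqrt{k})$ and the proportion tends to $\frac{m}{2m}=\frac{1}{2}$. One minor quibble: the hypothesis $m\geq 7$ (together with $k\geq\frac{m-3}{m-6}$) is needed not merely to keep the radicand positive but to make Construction \ref{C:lily} itself well defined (e.g.\ so that $t$ is a nonnegative integer), though this does not affect your limit argument.
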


As one could notice from the proof of the last theorem, values of $t$ and $d$ in steps $4$ and $5$ respectively are chosen in order to have $W(L(k,m))-W(L(k,m)-v)=0$. In a similar way for any fixed $z\in \mathbb{Z}$, one can choose $t'$ and $d'$ (denote the resulting graph $W(L'(k,m,z))$) in order to obtain the following equation  

$$W(L'(k,m,z))-W(L'(k,m,z)-v)=z.$$

Since the value $-km+6k+m-3$ for fixed $m\geq 7$ is a decreasing function on $k$ this choice is always possible for $k$ big enough. These arguments allow us to prove the following

\begin{corollary}\label{Cor:lilyz}
	For fixed $m\in \mathbb{N}$, $m\geq 7$, $z\in \mathbb{Z}$, the series of graphs $L'(k,m,z)$, $k\in \mathbb{N}$ has a proportion of vertices $v$, such that $W(L'(k,m,z))-W(L'(k,m,z)-v)=z$, tending to $\frac{1}{2}$ as $k$ tends to infinity.
\end{corollary}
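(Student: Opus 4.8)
The plan is to reuse the computation in the proof of Theorem~\ref{T:lily} essentially verbatim and only retune the pendant tail so that the balance equals $z$ rather than $0$. The crucial point there was that, for the petal-tip vertex $v=v^1_{11}$, neither $t_{\tilde G}(v)$ nor the entries of Table~\ref{Tab:lily} depend on the internal structure of the attached graph $G'$; they use only that $V\cap V'=\{u_0\}$. Hence, by Proposition~\ref{Prop:1}, the identity
$$\Delta_v(\tilde G)=t_{G'}(u_0)-|V'|-km+6k+m-3$$
holds for every admissible $G'$, and the requirement $\Delta_v(\tilde G)=z$ is equivalent to the single scalar condition
$$t_{G'}(u_0)-|V'|=km-6k-m+3+z=:N_z,$$
a copy of equation~\eqref{eq:lily} shifted by $z$.

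First I would prove a realizability statement: for every sufficiently large positive integer $N$ there is a tail $G'$, namely a path $(u_0,u_1,\dots,u_{d'})$ with one pendant vertex $u'_{t'}$ attached to $u_{t'}$, for which $t_{G'}(u_0)-|V'|=N$. The bare path contributes $\frac{(d'+1)(d'-2)}{2}$ to $t_{G'}(u_0)-|V'|$, and the pendant at $u_{t'}$ adds exactly $t'$, so one takes
$$d'=\Big\lfloor\tfrac{\sqrt{8N+9}+1}{2}\Big\rfloor,\qquad t'=N-\tfrac{(d'+1)(d'-2)}{2},$$
exactly as in steps~$4$ and~$5$ of Construction~\ref{C:lily}. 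The maximality of $d'$ forces $0\le t'<d'$, so the choice is legitimate; this is the same inequality chain as in Theorem~\ref{T:lily} with $N$ in place of $km-6k-m+3$. To pass from one large $N$ to the whole series, I would write $N_z=(m-6)k+(z-m+3)$ and use $m\ge 7$, so that $m-6\ge 1$ and $N_z$ is strictly increasing in $k$ with $N_z\to+\infty$. Thus for all $k$ beyond some threshold (depending on $m$ and $z$) the realizability statement applies, defining $L'(k,m,z)$ with $\Delta_{v^1_{11}}=z$; by the symmetry of the $m$ blocks and the $k$ vertices per part, the same holds for all $km$ petal-tips $v^i_{1j}$.

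Finally I would count and pin the limit. The vertex set of $L'(k,m,z)$ consists of $u_0$, the $km$ petal-tips $v^i_{1j}$, the $km$ second-layer vertices $v^i_{2j}$, the $m$ block-tips $v_i$, the $d'$ path vertices and one pendant, for a total of $2km+m+2+d'$ with $d'=O(\sqrt{k})=o(k)$; hence the proportion of $z$-good vertices is $\frac{km}{2km+m+2+d'}\to\frac12$. The delicate part, needed to show the limit is exactly $\frac12$ rather than merely at least that, is to verify that no other vertex attains $\Delta_v=z$ for large $k$. Here I would run through the remaining vertex types: $u_0$ and the interior path vertices (as well as $u_{t'}$) are cut-vertices and so are excluded; the path end $u_{d'}$ and the pendant $u'_{t'}$ are leaves, whose removal changes no pairwise distances, so $\Delta_v$ equals their (growing) transmission; the second-layer vertices satisfy $\Delta_v=t(v^i_{2j})-2\to+\infty$; and the block-tips $v_i$ are the subtle case, since removing $v_i$ lengthens all $\binom{k}{2}$ pairs $(v^i_{2j},v^i_{2j'})$ by $2$, giving $\Delta_{v_i}=\Theta(km)-\Theta(k^2)\to-\infty$. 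In every case $\Delta_v$ is undefined or tends to $\pm\infty$, so for $k$ large it misses the fixed value $z$, leaving exactly $km$ qualifying vertices. I expect this exhaustive dominance check, and in particular the block-tip estimate where the distance-change term overtakes the transmission, to be the main obstacle, whereas the $z$-shift itself is absorbed harmlessly once $k$ is large.
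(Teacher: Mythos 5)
Your proposal is correct and follows essentially the same route as the paper: the authors likewise note that $\Delta_{v}(\tilde G)=t_{G'}(u_0)-|V'|-km+6k+m-3$ depends on the attached tail only through $t_{G'}(u_0)-|V'|$, and retune $d'$ and $t'$ so that this quantity equals $km-6k-m+3+z$, observing that this is realizable for all sufficiently large $k$ because $-km+6k+m-3$ is decreasing in $k$ for fixed $m\geq 7$. Your explicit dominance check that no other vertex type attains $\Delta_v=z$ for large $k$ (cut vertices excluded, leaves and second-layer vertices with $\Delta_v\to+\infty$, block-tips with $\Delta_{v_i}=\Theta(km)-\Theta(k^2)\to-\infty$) is a careful elaboration of a step the paper treats only implicitly, but it does not change the method.
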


Construction \ref{C:lily} is conceptually close to the one from \cite{BJM2019}. Actually, steps $1$, $2$ and $4$ are similar. The authors also used a number of {\it blocks} (cycles of length at least $7$) having one common vertex and a tree of special structure outgoing from this vertex. Authors were solving a different problem, so they were not interested in analysing and minimizing the size of this tree depending on the number of blocks. 

In fact, if one uses the cycle $C_7$ as a block and a tree of the same structure as it is used in Construction \ref{C:lily} then in every block of the graph there are two vertices that would be good and the whole construction would give a proportion tending to $\frac{1}{3}$ as the number of blocks tends to infinity.

Thus, the better choice of a structure of a block and step $3$ in Construction \ref{C:lily} play a crucial role in Theorem \ref{T:lily}. After numerous attempts to improve presented constructions in order to increase $\alpha$, we tend to think that $\frac{1}{2}$ is the best possible value for constructions of this type.


\section{Conclusion}\label{S:Conclusion} 

In this paper, we consider a problem of constructing graphs $G$ with the given proportion of good vertices, i.e. vertices $v$ such that the following equality takes place $$W(G)=W(G-v).$$
As the main result we built a series of graphs with a proportion tending to $\frac{1}{2}$ with the number of vertices tending to infinity. 

Beside that, we present one intriguing example of a graph with the proportion of good vertices $\frac{2}{3}$.

\begin{example}
	Consider a graph $G$ obtained by adding to the cycle $(v_1,v_2, \dots , v_{12})$ of length $12$ four additional edges: $(v_1,v_3)$, $(v_4,v_6)$, $(v_7,v_9)$, $(v_{10},v_{12})$. This graph on $12$ vertices has $8$ good vertices, namely those of degree $3$.
\end{example}

\begin{center}
\begin{tikzpicture}[scale=0.75]

\def \n {12}
\def \radius {3cm}
\foreach [count=\j] \s in {1,...,\n}
{
	\node[draw, circle] (\j) at ({360/\n * (\s +1)}:\radius) {};
}	

\foreach [remember=\i as \j (initially 1)] \i in {2,...,\n} {
	\draw [thick] (\j) -- (\i);
}
\draw [thick] (1) -- (12);

\draw [thick]  (1) .. controls +(down:1cm) and +(down:1cm) .. (3);
\draw [thick]  (7) .. controls +(up:1cm) and +(up:1cm) .. (9);
\draw [thick]  (4) .. controls +(right:1cm) and +(right:1cm) .. (6);
\draw [thick]  (10) .. controls +(left:1cm) and +(left:1cm) .. (12);

\node[below,font=\large\bfseries] at (current bounding box.south) {Figure 3. Graph with a proportion $\frac{2}{3}$.};
\end{tikzpicture}
\end{center}

Despite the fact that we have not generalized this example for a bigger number of vertices, we expect that there exist an infinite series of graphs with a proportion $\alpha>\frac{1}{2}$, or perhaps even $\alpha$ tending to $1$.

Corollary \ref{Cor:lilyz} shows that instead of solving Problems \ref{Prob:1} and \ref{Prob:2} it is reasonable to consider the following more general problem statements.

\begin{problem}\label{Prob:3}
	For a fixed $z\in \mathbb{Z}$, find all such graphs $G$ that the equality $W(G) - W(G - v) = z$ holds for all their vertices $v$.
\end{problem}

\begin{problem}\label{Prob:4} 
	For a fixed $z\in \mathbb{Z}$ and $\alpha\in (0,1]$, construct an infinite series $S$ of graphs such that for all $G=(V,E)$ from $S$ the following inequality takes place: $$ \frac{|\{v\in V|\Delta_v(G)=z\}|}{|V|} \geq \alpha.$$
\end{problem}

If there are arguments that allow to solve Problems \ref{Prob:1} and \ref{Prob:2} then they would probably work for these ones too. 

\section{Acknowledgements}

The main part on this project was done during the research workshop "Open problems in Combinatorics and Geometry II", held in Adygea in September and October 2020.

The results from Section \ref{S:Bunch} are supported by the Ministry of Science and Higher Education of the Russian Federation in the framework of MegaGrant no 075-15-2019-1926. The work from Section \ref{S:Lily} was carried out within the framework of the state contract of the Sobolev Institute of Mathematics (project no. 0314-2019-0016). 

Authors are grateful to Andrey Dobrynin for interesting discussions on the theme of this paper.


\end{document}